\newtheorem{theorem}{Theorem}[section]
\newtheorem{lemma}[theorem]{Lemma}
\newtheorem{proposition}[theorem]{Proposition}
\theoremstyle{definition}
\newtheorem{definition}[theorem]{Definition}
\newtheorem{example}[theorem]{Example}
\theoremstyle{remark}
\newtheorem{remark}[theorem]{Remark}
\newtheorem{question}[theorem]{Question}
\newcommand{\tD}{\widetilde{D}}
\newcommand{\tm}{\tilde{m}}
\newcommand{\ty}{\tilde{y}}
\newcommand{\tx}{\tilde{x}}
\newcommand{\tz}{\tilde{z}}
\newcommand{\indicator}{\mathbbm{1}}
\newcommand{\sF}{\mathcal{F}}
\newcommand{\sD}{\mathcal{D}}
\newcommand{\RR}{\mathbb{R}}
\newcommand{\ZZ}{\mathbb{Z}}
\newcommand{\NN}{\mathbb{N}}
\DeclarePairedDelimiterX\set[1]{\lbrace}{\rbrace}{\def\given{\;\delimsize\vert\;}#1}
\DeclarePairedDelimiter{\abs}{\lvert}{\rvert}
\DeclarePairedDelimiter{\norm}{\lVert}{\rVert}
\newcommand{\card}[1]{\##1}
\renewcommand{\rho}{\varrho}
\begin{document}


\renewcommand{\bf}{\bfseries}
\renewcommand{\sc}{\scshape}

\title[Persistence diagrams do not have property A]{The space of persistence
  diagrams fails to have Yu's property A} 
\author[G.~Bell]{Greg Bell}
\address{Department of Mathematics \& Statistics, UNC Greensboro, Greensboro, NC 27402, USA} 
\email{gcbell@uncg.edu}
\urladdr{\url{http://www.uncg.edu/~gcbell/}} 
\author[A.~Lawson]{Austin Lawson}
\address{Informatics \& Analytics, UNC Greensboro, Greensboro, NC 27402, USA}
\email{azlawson@uncg.edu}
\urladdr{\url{http://www.uncg.edu/~azlawson/}} 
\author[C.~Pritchard]{Neil Pritchard}
\address{Department of Mathematics \& Statistics, UNC Greensboro, Greensboro, NC 27402, USA}
\email{cnpritch@uncg.edu}
\urladdr{\url{http://www.uncg.edu/~cnpritch/}} 
\author[D.~Yasaki]{Dan Yasaki}
\address{Department of Mathematics \& Statistics, UNC Greensboro, Greensboro, NC 27402, USA}
\email{d\_yasaki@uncg.edu}
\urladdr{\url{http://www.uncg.edu/~d_yasaki/}}

\begin{abstract}
We define a simple obstruction to Yu's property A that
we call $k$-prisms. This structure allows for a straightforward proof that the space of
persistence diagrams fails to have property A in a Wasserstein metric. 
\end{abstract}
\subjclass[2010]{54F45 (primary), 20F69 (secondary)}
\keywords{Asymptotic dimension, persistence diagrams, property A}

\maketitle

\section{Introduction}
A persistence diagram is one way to visualize the persistent
homology of a dataset~\cite{Edels-Harer}. Persistent homology allows the power
of algebraic topology to be leveraged against 
problems in diverse disciplines 
~\cite{de2007coverage,li2015identification}.

The space of persistence diagrams can be equipped with several natural metrics, which
provide the key feature of persistence diagrams, known as stability:
datasets that are close give rise to persistence diagrams that are close.
In this brief note, we investigate the coarse geometric properties of persistence diagrams in a family of these natural metrics.

Coarse geometry arose out of the study of metric properties of finitely
generated groups. Since Gromov's seminal paper~\cite{gromov93}, coarse geometry
has established itself as an interesting subject in its own right. 
%
Yu defined a simple condition of discrete metric spaces called property A
that implies the existence of a uniform embedding in Hilbert space
~\cite{Yu2}. 
Nowak provided a
simple example of a space that fails to have property A 
yet still admits a uniform embedding into Hilbert space~\cite{nowak2007}.  

In Theorem~\ref{thm:k-prisms_infinite_asdim} we provide a simple 
obstruction to property A that we call $k$-prisms. 
This structure allows for an isometric embedding of the simplest version of
Nowak's example into the metric space in question. 
We show that the space of 
persistence diagrams has $k$-prisms, hence it cannot have property A.  
The relevance of this result is that discrete spaces with property A admit a uniform embedding into Hilbert space~\cite{Yu2}. Yet, in order to apply kernel methods to persistence diagrams, the standard approach is to embed them into Hilbert space in a controlled way. The first results in this direction appeared in~\cite{carrire}. Similar results~\cite{Bubenik-Wagner,mitra2019space,wagner2019nonembeddability} appeared around the same time as the first version of this article was posted. 
The notion of $k$-prisms was first applied to Cayley 
graphs of the integers with infinite generating sets~\cite{pritchard-thesis}. 

While we do not attempt to answer the broader question of whether persistence diagrams admit a uniform embedding into Hilbert space, after the initial version of this paper appeared, Bubenik and Wagner~\cite{Bubenik-Wagner} resolved several of our questions from Section 3. We have left these questions intact in this revised version since that paper references them. The authors wish to thank Boris Goldfarb for bringing our attention to possible connections between this question and applications to machine learning. The authors also wish to thank the anonymous referee for many helpful remarks and for making us aware of~\cite{carrire}.

\section{An obstruction to property A}\label{sec:k-prisms}

We include the definition of property A (for a discrete metric space) for completeness, but this definition is
not used in a substantial way in this paper. 
\begin{definition}[\cite{Yu2}]
A (discrete) metric space $X$ is said to have \emph{property~A} if for all $R>0$
and all $\epsilon>0$, there exists a family $\set{A_x}_{x\in X}$ of finite,
non-empty subsets of $X\times\NN$ such that 
\begin{enumerate}
\item for all $x,y\in X$ with $d(x,y)\le R$, we have
  $\frac{\card{(A_x\Delta A_y)}}{\card{(A_x\cap A_y)}}\le \epsilon$, and 
\item there exists a $B > 0$ such that for every $x\in X$, if $(y,n)\in
  A_x$, then $d(x,y)\le B$. 
\end{enumerate}
Here $\card{A}$ is the cardinality of $A$ and $A_x\Delta A_y$ denotes the symmetric difference.
\end{definition}

\begin{example}[{\cite[Theorem 5.1]{nowak2007}}]\label{EX:cubes}    Let $\{ 0, k
  \}^n$ be the set of vertices of an $n$-dimensional cube at scale $k$ endowed
  with the $\ell_1$-metric. 
Endow the disjoint union $\coprod_{n=1}^\infty \set{ 0, k }^n$  with a metric
such that the distance from $\set{0,k}^n$ to $\set{0,k}^{n+1}$ is at least
$n+1$. We denote this union of $k$-scale cubes by $C_k$; it is a locally finite metric space that fails to have property A.  
\end{example}

In order to utilize Example~\ref{EX:cubes}, we define the notion of
$k$-prisms. We show that a metric space with $k$-prisms contains an isometric copy of $C_k$.  

\begin{definition}\label{def:k-prisms}
Let $k$ be a positive integer. We say that a metric space $(X,d)$ has
$k$-\emph{prisms} if for any finite set $F \subset X$ there exists a function
$T\colon F\to X$ such that  
\begin{enumerate}
\item $T(F) \cap F = \emptyset$;
\item $d(T(x),T(y))=d(x,y)$ for all $x,y\in F$; and
\item $d(x,T(y))=k+d(x,y)$ for all $x,y \in F$.
\end{enumerate}
\end{definition}

\begin{remark}
Motivated by working with Cayley graphs~\cite{pritchard-thesis}, we take the $k$ in this definition to be an integer, but there is no harm in allowing $k>0$ to be any real number. We also observe that a metric space with $k$-prisms will have $nk$-prisms for all $n \in \mathbb{N}$. 
\end{remark}

%

\begin{lemma}\label{lem:C_k}
Let $X$ be a metric space with $k$-prisms for some $k\ge 1$. Then,
\begin{enumerate}
    \item the space $X$ contains an isometric
copy of $\set{k,2k,3k,\ldots}$ and \label{lem:geodesic}
    \item for any $x\in X$ and any $n\in\NN$, the space $X$ contains an isometric copy
  of $\set{0,k}^n$ with $x$ as a vertex. \label{lem:cubes} 
\end{enumerate}
\end{lemma}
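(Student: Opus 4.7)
The plan is to prove both parts by induction, using the $k$-prism property to extend a given isometric copy by one new ``layer'' at a time, with the layer being added via the map $T$ provided by the definition.

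For part (1), I would build an isometric copy of $\{0,k,2k,\ldots,nk\}$ inductively on $n$. Start with an arbitrary point $p_0\in X$. Assuming points $p_0,\ldots,p_n$ have been constructed with $d(p_i,p_j)=k|i-j|$, apply $k$-prisms to $F=\{p_0,\ldots,p_n\}$ to obtain $T\colon F\to X$, and set $p_{n+1}:=T(p_n)$. Using condition~(3) of Definition~\ref{def:k-prisms},
\[
d(p_i,p_{n+1}) = d(p_i,T(p_n)) = k+d(p_i,p_n) = k+k(n-i) = k(n+1-i),
\]
so the extended sequence is again an isometric copy of a finite arithmetic progression with common difference $k$. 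Taking the union yields the desired isometric copy (a relabeling handles the discrepancy with $\{k,2k,3k,\ldots\}$).

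For part (2), I would induct on $n$. The base $n=1$ follows from applying $k$-prisms to $\{x\}$ to obtain $y=T(x)$ with $d(x,y)=k$ (using condition~(3) with $x=y$), giving an isometric copy of $\{0,k\}$ with $x$ as a vertex. For the inductive step, suppose $F=\{v_s:s\in\{0,k\}^n\}\subset X$ is an isometric copy of $\{0,k\}^n$ containing $x$ as a vertex. Apply $k$-prisms to the finite set $F$ to obtain a map $T\colon F\to X$, and define a map $\{0,k\}^{n+1}\to X$ by
\[
(s,0)\longmapsto v_s, \qquad (s,k)\longmapsto T(v_s),
\]
for $s\in\{0,k\}^n$. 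Condition~(1) guarantees that the image has the correct cardinality $2^{n+1}$, while conditions~(2) and~(3) let me verify the three classes of pairwise distances: between two ``bottom'' vertices the distance is inherited from the inductive hypothesis; between two ``top'' vertices the distance agrees with the bottom by~(2); and between a bottom and a top vertex the distance is the corresponding bottom-bottom distance plus $k$ by~(3). A direct check matches these to the $\ell_1$-distance in $\{0,k\}^{n+1}$. Since $x\in F$ is preserved as one of the new vertices, $x$ remains a vertex of the enlarged cube.

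I do not anticipate a serious obstacle: the three defining properties of $k$-prisms are precisely tailored to the three cases of distances that arise when doubling a cube via a parallel ``translate,'' and part (1) is the degenerate case where the cube is replaced by a single point at each step. The one subtlety to be careful about is ensuring that the already-constructed points are preserved (not modified) at each inductive stage, which is automatic since $T$ only produces new points and conditions~(1)--(3) concern distances within $F\cup T(F)$.
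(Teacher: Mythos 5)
Your proposal is correct and follows essentially the same approach as the paper: part (1) is built by recursively applying $T$ to the finite set of already-constructed points and taking $x_{n+1}=T(x_n)$, exactly as in the paper's proof. The paper only writes out part (1) and declares part (2) ``similar''; your cube-doubling argument for part (2) is precisely the intended elaboration, with the three distance cases matching conditions (1)--(3) of Definition~\ref{def:k-prisms}.
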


\begin{proof}
We prove~\eqref{lem:geodesic}. The proof of ~\eqref{lem:cubes} is similar.

Fix a point $x_0\in X$, and let $F=\set{x_0}$. Since $X$ has $k$-prisms, there is a point $x_1\in X$
such that $d(x_0,x_1)=k$. For $n>1$, define $x_n$ recursively as follows. Let $F$ be the set
$F=\set{x_0,x_1,\ldots,x_{n-1}}$. Since $X$ has $k$-prisms, use $T$ from the
definition to define $x_n=T(x_{n-1})$. We observe that $d(x_{n-1},x_n)=k$, and
in general $d(x_i,x_j)=|i-j|k$. The sequence $\set{x_0,x_1,\ldots}$ is the
required isometric copy. 
\end{proof}


\begin{theorem} \label{thm:k-prisms_infinite_asdim}
Let $X$ be a metric space. If $X$ has $k$-prisms for some $k\ge 1$, then $X$ fails to have property A. 
\end{theorem}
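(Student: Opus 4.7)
The plan is to build an isometric copy of Nowak's space $C_k$ from Example~\ref{EX:cubes} inside $X$, and then invoke the fact that property~A is inherited by subspaces: since $C_k$ fails property~A, so must $X$.

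Concretely, I will construct pairwise disjoint subsets $Q_1, Q_2, \ldots \subset X$ with $Q_n$ isometric to $\set{0,k}^n$ and $d(Q_n, Q_{n+1}) \ge n+1$, proceeding inductively. Suppose $Q_1, \ldots, Q_n$ are built, and set $F = Q_1 \cup \cdots \cup Q_n$. The required separation comes from iterating the $k$-prism property along a ``pipe'': fix $y_0 \in F$, apply $k$-prisms to $F$ to get a function $T_1$ and set $y_1 = T_1(y_0)$; apply $k$-prisms to $F \cup \set{y_1}$ to get $T_2$ and set $y_2 = T_2(y_1)$; continue to $y_{n+1}$. The defining identity $d(z, T(y)) = k + d(z, y)$ gives, by induction, $d(z, y_i) = ik + d(z, y_0)$ for every $z \in F$, so $d(F, y_{n+1}) \ge (n+1)k$. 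Starting from $\set{y_{n+1}}$ I then perform the cube-doubling construction implicit in Lemma~\ref{lem:C_k}(\ref{lem:cubes}), at each stage applying the $k$-prism property to the \emph{entire} currently built finite set (i.e., $F$, the pipe, and all partial cube vertices). This produces $Q_{n+1}$ isometric to $\set{0,k}^{n+1}$.

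The main bookkeeping step is to check that every vertex of $Q_{n+1}$ stays at distance at least $(n+1)k \ge n+1$ from $F$. This again follows from $d(z, T(y)) = k + d(z, y)$: each cube-doubling step adds exactly $k$ to every distance from the existing ambient set, so if all current cube vertices are at distance at least $(n+1)k$ from $F$, so are their $T$-images. The base case $d(F, y_{n+1}) \ge (n+1)k$ is precisely what the pipe provides.

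Collecting the $Q_n$ gives an isometric copy of $C_k$ inside $X$. Since property~A is inherited by isometric subspaces and $C_k$ fails property~A by Example~\ref{EX:cubes}, $X$ cannot have property~A. The main obstacle is the distance bookkeeping across the two iterated applications of the $k$-prism property (the pipe and then the cube), but both reduce to the single identity $d(z, T(y)) = k + d(z, y)$.
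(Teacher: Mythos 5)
Your proposal is correct and follows essentially the same route as the paper: both build an isometric copy of Nowak's space $C_k$ inside $X$ by iterating the $k$-prism property and then appeal to Example~\ref{EX:cubes} together with the heredity of property~A. If anything, your ``pipe'' construction and the insistence on applying the $k$-prism map to the \emph{entire} finite set built so far make explicit the separation bookkeeping that the paper's proof only asserts with ``we can arrange these cubes so that the distance between $\set{0,k}^n$ and $\set{0,k}^{n+1}$ is at least $n+1$.''
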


\begin{proof}
Let $\set{x_0,x_1,\dots}$ be an isometric copy of $\{k, 2k, 3k,\dots\}$ in $X$
given by Lemma~\ref{lem:C_k}\eqref{lem:geodesic}. Use Lemma~\ref{lem:C_k}\eqref{lem:cubes} to construct copies
of $\{0,k\}^n$ with vertices along this sequence. Since $\set{x_0,x_1,\ldots}$ is
an isometric copy of $\set{k,2k,\ldots}$, we can arrange these cubes in such a way
that the distance between $\set{0,k}^n$ and $\set{0,k}^{n+1}$ is at least
$n+1$. Thus, $X$ contains an isometrically embedded copy of the space $C_k$, described
in Example~\ref{EX:cubes}.   
\end{proof}

\section{The space of persistence diagrams fails to have property A}

The notion of a persistence diagram appears in many places. We follow the 
development given by Chazal, de Silva, Glisse, and Oudot~\cite{chazal-book} except 
that we allow more general spaces instead of focusing on the extended half-plane.

For a set $S$, denote by $\Delta_{S}$
the diagonal, 
\[\Delta_{S}=\set{(s,s)\in S^2 \given s\in S}.\]

\begin{definition}  Let $S$ be a set.  A \emph{diagram on $S$} is a function
  $D \colon S^2\to \ZZ_{\ge 0}$ such that $D(p)=0$ for all but finitely many
  $p \in S^2$, and $D(p) = 0$ for all $p \in \Delta_S$.  For $p \in S^2$, the
  value $D(p)$ is the \emph{multiplicity of 
    $p$}. The associated \emph{labeled diagram on $S$} is the set
$\tD \subseteq S^2\times\mathbb{Z}_{\ge 0}$ given by 
\[\tD = \set{(x,i) \given i = 0, 1, \dots, D(x)}.\]
\end{definition}

If $\rho$ is a metric on $S^2$, we write $\rho(\tx,\ty)$ to mean $\rho(x,y)$, 
where $\tx=(x,i)$ and $\ty=(y,j)$ are elements of a labeled diagram on $S$. 
We write $\norm{\tx}$ to mean 
\[\norm{\tx}=\norm{(x,i)}=\inf\{\rho(x,z)\mid z\in\Delta_S\}. \]

\begin{definition}
Let $S$ be a set.  A \emph{partial matching of labeled diagrams $\tD_X$ and
  $\tD_Y$ on $S$} is a subset 
$\tm \subseteq \tD_X \times \tD_Y$ such that 
  \begin{enumerate}
  \item for every $\tx \in \tD_X$, the cardinality $\#\set{(\tx,\ty) \in \tm
    \given \ty \in \tD_Y}$ is at most $1$; and
  \item for every $\ty \in \tD_Y$, the cardinality $\#\set{(\tx,\ty) \in \tm
    \given \tx \in \tD_X}$ is at most $1$. 
  \end{enumerate}
\end{definition}

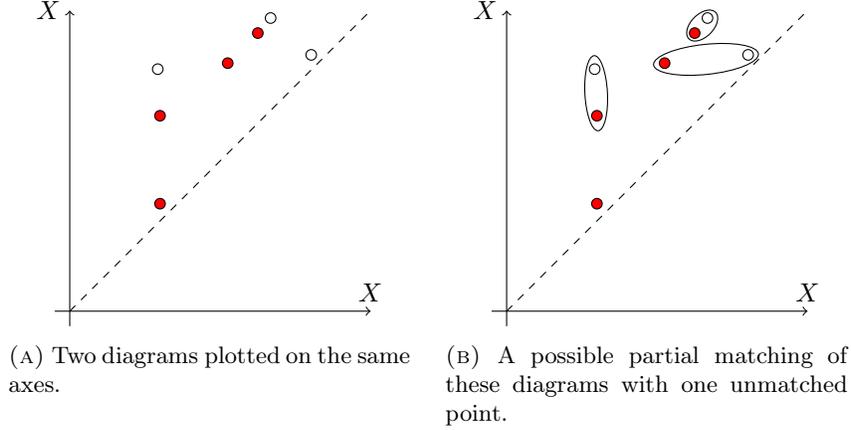
\begin{figure*}[t!]
    \subcaptionbox{Two diagrams plotted on the same axes.}[0.45\textwidth]{
        \begin{tikzpicture}
\draw [->] (-0.2,0)--(4,0) node [above] {$X$};
\draw [->] (0,-.2)-- (0,4) node [left] {$X$};
\draw [dashed] (0,0) -- (4,4);
\foreach \t in {(1.2,2.6), (2.1,3.3), (2.5, 3.7), (1.2,1.43)}
\draw [shift=\t, fill=red] circle (2pt);
\foreach \t in {(1.17,3.22), (2.67, 3.9), (3.21,3.41)}
\draw [shift=\t, fill=white] circle (2pt);
\end{tikzpicture}}
    \subcaptionbox{A possible partial matching of these diagrams with one unmatched point.}[0.45 \textwidth]{
         \begin{tikzpicture}
\draw [->] (-0.2,0)--(4,0) node [above] {$X$};
\draw [->] (0,-.2)-- (0,4) node [left] {$X$};
\draw [dashed] (0,0) -- (4,4);
\foreach \t in {(1.2,2.6), (2.1,3.3), (2.5, 3.7), (1.2,1.43)}
\draw [shift=\t, fill=red] circle (2pt);
\foreach \t in {(1.17,3.22), (2.67, 3.9), (3.21,3.41)}
\draw [shift=\t, fill=white] circle (2pt);
\draw [rotate around={45:(2.6,3.8)}, shift={(2.6,3.8)}] (0,0) ellipse (.25 and .15);
\draw [rotate around={6:(2.65,3.35)}] (2.65,3.35) ellipse (.7 and .2);
\draw [rotate around={3:(1.19,2.9)}] (1.19,2.9) ellipse (.15 and .5);
\end{tikzpicture}}
    \caption{Determining the distance between diagrams.}
\end{figure*}

\begin{definition}\label{def:distance}
  Let $\tm$ be any partial matching of labeled diagrams $\tD_X$ and $\tD_Y$ on a set $S$. 
  Let $\rho$ be a metric on $S^2$. Let $\pi_i(\tm)$ denote the projection to the 
  $i$-th coordinate of the partial matching $\tm$ ($i\in\{1,2\})$.
  The
  \emph{$(\tm,\rho)$-distance}, denoted $W_{\tm,\rho}(D_X,D_Y)$, is 
\[
W_{\tm,\rho}(D_X,D_Y) = \sum_{\tx \in \tD_X \setminus \pi_1(\tm)} \norm{\tx}
+ \sum_{\ty \in
  \tD_Y \setminus \pi_2(\tm)} \norm{\ty}
+ \sum_{(\tx,\ty) \in \tm} \rho(\tx,\ty).\]

The \emph{Wasserstein $\rho$-distance}, denoted $W_\rho(D_X,D_Y)$, is 
the minimum of $W_{\tm,\rho}(D_X,D_Y)$ over the (finite) collection of all 
partial matchings $\tm$. 
\end{definition}

\begin{theorem}\label{thm:metric}
Let $\sD_S$ be the set of all diagrams
on a set $S$. If $\rho$ is a metric on $S^2$,
then $W_\rho$ is a metric on
$\sD_S$. 
\end{theorem}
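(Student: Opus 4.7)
The plan is to verify the four metric axioms: non-negativity, $W_\rho(D, D) = 0$, symmetry, and triangle inequality. Non-negativity is immediate since $\norm{\tx} \geq 0$ and $\rho$ is a metric. For $W_\rho(D, D) = 0$, take the identity partial matching of $\tD$ with itself (pairing each $\tx$ with itself), whose cost is $\sum_{\tx \in \tD} \rho(\tx, \tx) = 0$. Symmetry follows by transposing: if $\tm$ is a partial matching of $(\tD_X, \tD_Y)$ then $\set{(\ty, \tx) \given (\tx, \ty) \in \tm}$ is a partial matching of $(\tD_Y, \tD_X)$ with identical cost. For definiteness, note first that the set of partial matchings is finite so the minimum is attained; if $W_\rho(D_X, D_Y) = 0$, each term of the optimal matching vanishes, yielding $\rho(x, y) = 0$ (hence $x = y$) on matched pairs and $\norm{\tx} = 0$ on unmatched points. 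Under the mild assumption that $\Delta_S$ is closed in $(S^2, \rho)$---so that $\norm{\tx} = 0$ forces $\tx$ to lie on the diagonal---no unmatched points can occur, and the matching identifies $\tD_X$ with $\tD_Y$ point-by-point, giving $D_X = D_Y$.

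The heart of the proof is the triangle inequality. Given $D_X, D_Y, D_Z$ and optimal matchings $\tm_{XY}$ and $\tm_{YZ}$ realizing $W_\rho(D_X, D_Y)$ and $W_\rho(D_Y, D_Z)$, define the composite
\[
\tm_{XZ} = \set{(\tx, \tz) \in \tD_X \times \tD_Z \given \exists \ty \text{ with } (\tx, \ty) \in \tm_{XY} \text{ and } (\ty, \tz) \in \tm_{YZ}}.
\]
The matching conditions for $\tm_{XZ}$ follow because each $\tx$ meets at most one $\ty$ in $\tm_{XY}$ (and symmetrically for $\tz$). The key auxiliary inequality is
\[
\norm{\tx} \leq \rho(\tx, \ty) + \norm{\ty},
\]
obtained from $\rho(x, z) \leq \rho(x, y) + \rho(y, z)$ for $z \in \Delta_S$ by taking the infimum over $z$.

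It remains to compare $W_{\tm_{XZ}, \rho}(D_X, D_Z)$ with $W_{\tm_{XY}, \rho}(D_X, D_Y) + W_{\tm_{YZ}, \rho}(D_Y, D_Z)$ by a careful accounting. Classify each $\ty \in \tD_Y$ by its matching status in the two matchings: \emph{matched on both sides} gives a triple $(\tx, \ty, \tz)$ with $\rho(\tx, \tz) \leq \rho(\tx, \ty) + \rho(\ty, \tz)$; \emph{matched only in} $\tm_{XY}$ leaves $\tx$ unmatched in $\tm_{XZ}$ with $\norm{\tx} \leq \rho(\tx, \ty) + \norm{\ty}$ by the key estimate; \emph{matched only in} $\tm_{YZ}$ is symmetric. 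Points $\tx$ unmatched in $\tm_{XY}$ and points $\tz$ unmatched in $\tm_{YZ}$ stay unmatched in $\tm_{XZ}$ and carry their $\norm{\cdot}$ contributions unchanged. Summing shows $W_{\tm_{XZ}, \rho} \leq W_{\tm_{XY}, \rho} + W_{\tm_{YZ}, \rho}$, which bounds $W_\rho(D_X, D_Z)$ above as required. The main obstacle is this final bookkeeping: one must ensure every term on the left is charged once and only once to the contributions on the right, handling the four-way classification of $\ty$ without double counting.
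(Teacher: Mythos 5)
Your argument is essentially the paper's: symmetry by transposing matchings, and the triangle inequality by composing the two optimal matchings through the middle diagram, using $\rho(\tx,\tz)\le\rho(\tx,\ty)+\rho(\ty,\tz)$ on doubly matched triples and the estimate $\norm{\tx}\le\rho(\tx,\ty)+\norm{\ty}$ on points whose partner is unmatched on the far side. The bookkeeping you describe but do not fully execute is exactly what the paper's Proposition~\ref{prop:triangle} writes out in the inequalities \eqref{eq:bound1}--\eqref{eq:bound3}: each surviving term is charged either to the matched sum or to the unmatched sums of $W_{\tm_{X,Z},\rho}$ and $W_{\tm_{Z,Y},\rho}$ (note that the leftover $\norm{\tz}$ terms from one side get absorbed into the unmatched-point sum of the \emph{other} composed matching), so your sketch completes without difficulty. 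The one place you go beyond the paper is positive definiteness: the paper asserts it ``follows from the requirement that $D(p)=0$ on $\Delta_S$,'' but as you observe, $\norm{\tx}=\inf\set{\rho(x,z)\given z\in\Delta_S}$ can vanish at an off-diagonal point $x$ when $\Delta_S$ fails to be closed in $(S^2,\rho)$, and then a diagram supported on such a point is at $W_\rho$-distance $0$ from the zero diagram via the empty matching. So your ``mild assumption'' is not cosmetic---some such hypothesis (or reading $W_\rho$ as a pseudometric) is genuinely needed for the statement at the stated level of generality, and it holds automatically in the application later in the paper, where $S=\RR$ and $\rho(x,y)=\norm{x-y}_{\infty}^{q}$.
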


\begin{proof}  It is clear that $W_\rho$ is symmetric. The fact that $W_\rho$ is positive definite follows from the requirement that $D(p)=0$ for all points $p\in\Delta_S$. The triangle inequality follows from Proposition~\ref{prop:triangle}.
\end{proof}

\begin{definition}
Let $\tD_X$, $\tD_Y$, and $\tD_Z$ be labeled diagrams.  Let $\tm_{X,Z}$ be a
partial matching of $\tD_X$ and $\tD_Z$, and let $\tm_{Z,Y}$ be a partial
matching of $\tD_Z$ and $\tD_Y$.  The \emph{composition of $\tm_{X,Z}$ and
  $\tm_{Z,Y}$} is the subset $\tm_{X,Y}\subseteq \tD_X \times \tD_Y$ consisting
of elements $(\tx, \ty)$ such that there exists $\tz \in \tD_Z$ such that
$(\tx,\tz) \in \tm_{X,Z}$ and $(\tz,\ty) \in \tm_{Z,Y}$.  
\end{definition}
It is clear that the composition of partial matchings is a partial matching.  

\begin{proposition}\label{prop:triangle}
Let $S$ be a set and let $(S^2,\rho)$ be a metric space.
Let $D_X$, $D_Y$, and $D_Z$ be diagrams on $S$.  Then 
\[W_\rho(D_X,D_Y) \leq W_\rho(D_X,D_Z) + W_\rho(D_Z,D_Y).\]
\end{proposition}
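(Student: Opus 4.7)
The plan is to prove the triangle inequality by constructing an explicit partial matching between $\tD_X$ and $\tD_Y$ from optimal (or $\epsilon$-optimal) matchings $\tm_{X,Z}$ and $\tm_{Z,Y}$, and checking that the cost of the composition $\tm_{X,Y}$ is bounded by the sum of the costs of $\tm_{X,Z}$ and $\tm_{Z,Y}$. Since $W_\rho(D_X,D_Y) \le W_{\tm_{X,Y},\rho}(D_X,D_Y)$, this suffices. The only real input is the ordinary triangle inequality for $\rho$ on $S^2$, together with the fact that $\norm{\cdot}$, being defined as a distance to $\Delta_S$, also satisfies $\norm{\tx} \le \rho(\tx,\tz)+\norm{\tz}$.

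Concretely, I would choose partial matchings $\tm_{X,Z}$ and $\tm_{Z,Y}$ achieving $W_\rho(D_X,D_Z)$ and $W_\rho(D_Z,D_Y)$ (they exist since the matchings are from a finite collection). Form $\tm_{X,Y}$ by composition. Then classify each term that appears in $W_{\tm_{X,Y},\rho}(D_X,D_Y)$ according to the status of the intermediate point $\tz \in \tD_Z$:
\begin{enumerate}
\item If $(\tx,\ty) \in \tm_{X,Y}$, there exists $\tz \in \tD_Z$ with $(\tx,\tz)\in\tm_{X,Z}$ and $(\tz,\ty)\in\tm_{Z,Y}$, so $\rho(\tx,\ty) \le \rho(\tx,\tz) + \rho(\tz,\ty)$.
\item If $\tx \in \tD_X \setminus \pi_1(\tm_{X,Y})$, then either $\tx \in \tD_X \setminus \pi_1(\tm_{X,Z})$, in which case $\norm{\tx}$ already contributes to $W_{\tm_{X,Z},\rho}(D_X,D_Z)$, or $\tx$ is paired in $\tm_{X,Z}$ with some $\tz \in \tD_Z \setminus \pi_1(\tm_{Z,Y})$, in which case $\norm{\tx} \le \rho(\tx,\tz) + \norm{\tz}$, and both terms on the right appear in $W_{\tm_{X,Z},\rho}(D_X,D_Z) + W_{\tm_{Z,Y},\rho}(D_Z,D_Y)$.
\item The analogous case for $\ty \in \tD_Y \setminus \pi_2(\tm_{X,Y})$ is symmetric.
\end{enumerate}

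Summing these inequalities gives
\[W_{\tm_{X,Y},\rho}(D_X,D_Y) \le W_{\tm_{X,Z},\rho}(D_X,D_Z) + W_{\tm_{Z,Y},\rho}(D_Z,D_Y),\]
where the key accounting check is that each term on the right is used \emph{at most once}: a matched pair $(\tx,\tz)\in\tm_{X,Z}$ is used either to produce a matched pair in $\tm_{X,Y}$ (case 1) or to bound an unmatched $\tx$ (case 2), but not both, because these cases are determined by whether the partner $\tz$ is matched in $\tm_{Z,Y}$; and an unmatched $\tz$ in $\tm_{Z,Y}$ is charged at most once on the $\tD_X$ side (by its unique $\tm_{X,Z}$-partner, if any). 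Taking the minimum over all $\tm_{X,Y}$-matchings on the left and using optimality of the chosen $\tm_{X,Z}, \tm_{Z,Y}$ on the right yields the desired inequality.

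The main obstacle is the bookkeeping in the last paragraph: making sure each unmatched contribution on the $X$ or $Y$ side is associated with exactly one term from the sum $W_\rho(D_X,D_Z)+W_\rho(D_Z,D_Y)$, with no double-counting. This is handled cleanly by partitioning $\tD_X$, $\tD_Y$, $\tD_Z$ according to which of $\tm_{X,Z}$, $\tm_{Z,Y}$ they participate in.
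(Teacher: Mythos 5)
Your proposal is correct and follows essentially the same route as the paper's proof: compose optimal matchings, apply the triangle inequality for $\rho$ to matched pairs and the inequality $\norm{\tx}\le\rho(\tx,\tz)+\norm{\tz}$ to unmatched points, and partition the terms by whether the intermediate $\tz$ is matched so that nothing is double-counted. The paper carries out exactly this bookkeeping via its displayed bounds, so there is nothing to add.
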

\begin{proof}
  By definition, there exist a partial matching $\tm_{X,Z}$ of labeled diagrams 
  $\tD_X$ and $\tD_Z$ associated to diagrams $D_X$ and $D_Z$ that realizes $W_\rho(D_X, D_Z)$ and a partial matching $\tm_{Z,Y}$ of labeled diagrams 
  $\tD_Z$ and $\tD_Y$ associated to diagrams $D_Z$ and $D_Y$ that realizes $W_\rho(D_Z,D_Y)$.
Let $\tm$ be the composition of $\tm_{X,Z}$ and $\tm_{Z,Y}$.   Then, 
\[
W_{\tm,\rho}(D_X,D_Y) = \sum_{\tx \in \tD_X \setminus \pi_1(\tm)} \norm{\tx} +  \\
\sum_{\ty \in
  \tD_Y \setminus \pi_2(\tm)} \norm{\ty}
+ \sum_{(\tx,\ty) \in \tm} \rho(\tx,\ty).
\]
We examine more closely the terms in each sum.  Suppose $(\tx,\ty) \in \tm$.
Then there exists $\tz \in \tD_Z$ such that $(\tx, \tz) \in \tm_{X,Z}$ and $(\tz,
\ty) \in \tm_{Z,Y}$.  By the triangle inequality for $\rho$, we have 
\[\rho(\tx,\ty) \leq \rho(\tx,\tz) + \rho(\tz,\ty).\]
Thus 
\begin{equation}\label{eq:bound1}
\sum_{(\tx,\ty) \in \tm} \rho(\tx,\ty) \leq 
\sum_{\substack{(\tx,\tz) \in \tm_{X,Z}\\ \tz \in \pi_1(\tm_{Z,Y})}}
\rho(\tx,\tz)  + 
\sum_{\substack{(\tz, \ty) \in \tm_{Z,Y}\\ \tz \in \pi_2(\tm_{X,Z})}} \rho(\tz,\ty). 
\end{equation}

If  $\tx \in \tD_X \setminus
\pi_1(\tm)$, then $\tx$ is unmatched in $\tm$.  Then either 
\begin{enumerate}
\item $\tx$ is unmatched in $\tm_{X,Z}$ so that $\tx \in \tD_X \setminus
  \pi_1(\tm_{X,Z})$; or 
\item  $\tx$ is matched in $\tm_{X,Z}$ so there exists $\tz \in \tD_Z$ with
  $(\tx,\tz) \in \tm_{X,Z}$, but
  $\tz$ is unmatched in $\tm_{Z,Y}$ so that $\tz \not \in \pi_1(\tm_{Z,Y})$.
\end{enumerate}

For every $\tx$ and $\tz$ in a labeled diagram on $X$, the triangle inequality implies
\begin{equation}\label{eq:triangle}
\norm{\tx}\le \rho(\tx,\tz)+\norm{\tz}.
\end{equation}

Thus 
\begin{multline}\label{eq:bound2}
  \sum_{\tx \in \tD_X \setminus \pi_1(\tm)} \norm{\tx} \le
\sum_{\tx \in \tD_X \setminus
  \pi_1(\tm_{X,Z})} \norm{\tx} + 
\sum_{\substack{(\tx,\tz) \in \tm_{X,Z} \\ \tz \not \in \pi_1(\tm_{Z,Y})}}
\norm{\tx} \\
\le 
\sum_{\tx \in \tD_X \setminus
  \pi_1(\tm_{X,Z})} \norm{\tx}
  +
  \sum_{\substack{(\tx,\tz) \in \tm_{X,Z} \\ \tz \not \in \pi_1(\tm_{Z,Y})}}
\rho(\tx,\tz)
+\sum_{\substack{(\tx,\tz) \in \tm_{X,Z} \\ \tz \not \in \pi_1(\tm_{Z,Y})}}
\norm{\tz}.
\end{multline}

Similarly, if  $\ty \in \tD_Y \setminus
\pi_2(\tm)$, then $\ty$ is unmatched in $\tm$.  Then either 
\begin{enumerate}
\item $\ty$ is unmatched in $\tm_{Z,Y}$ so that $\ty \in \tD_Y \setminus
  \pi_2(\tm_{Z,Y})$; or 
\item  $\ty$ is matched in $\tm_{Z,Y}$ so there exists $\tz \in \tD_Z$ with
  $(\tz,\ty) \in \tm_{Z,Y}$, but
  $\tz$ is unmatched in $\tm_{X,Z}$ so that $\tz \in \tD_Z \setminus
  \pi_2(\tm_{X,Z})$.
\end{enumerate}
Thus,
\begin{equation}\label{eq:bound3}
  \sum_{\ty \in \tD_Y \setminus \pi_2(\tm)} \norm{\ty} \leq 
\sum_{\ty \in \tD_Y \setminus
  \pi_1(\tm_{Z,Y})} \norm{\ty} +
\sum_{\substack{(\tz,\ty) \in \tm_{Z,Y} \\ \tz \not \in \pi_2(\tm_{X,Z})}}
\rho(\ty,\tz)
+ 
\sum_{\substack{(\tz,\ty) \in \tm_{Z,Y} \\ \tz \not \in \pi_2(\tm_{X,Z})}}
\norm{\tz}.
\end{equation}

Combining the inequalities \eqref{eq:bound1}, \eqref{eq:bound2}, and
\eqref{eq:bound3}, we have
\begin{multline*}
  W_{\tm,\rho}(D_X,D_Y) \leq \\
\left(\sum_{(\tx,\tz) \in \tm_{X,Z}}
\rho(\tx,\tz)  + \sum_{\tx \in \tD_X \setminus
  \pi_1(\tm_{X,Z})} \norm{\tx}
 +  \sum_{\substack{(\tx,\tz) \in \tm_{X,Z} \\ \tz \not \in \pi_1(\tm_{Z,Y})}}
\norm{\tz}
\right)\\
 + \left(\sum_{(\tz, \ty) \in \tm_{Z,Y}}
\rho(\tz,\ty)  + 
\sum_{\ty \in \tD_Y \setminus
  \pi_1(\tm_{Z,Y})} \norm{\ty}
 +  \sum_{\substack{(\tz,\ty) \in \tm_{Z,Y} \\ \tz \not \in \pi_2(\tm_{X,Z})}}
\norm{\tz}
  \right).
\end{multline*}
Thus,
\[W_{\tm,\rho}(D_X,D_Y) \leq W_{\tm_{X,Z},\rho}(D_X,  D_Z) + W_{\tm_{Z,Y},\rho}(D_Z, D_Y),
\]
and the result follows.
\end{proof}

\begin{definition} Let $k\ge 1$ be an integer. A set $S$ is \emph{$k$-diagrammable} 
if there exists a metric $\rho$ on $S^2$ in which the
$k$-shell around the diagonal, $\set{x\in S^2\mid \rho(x,\Delta_S)=k}$, is unbounded.
Such a metric is called a \emph{diagram metric}. We call a set $S$ \emph{diagrammable} if it is 
$k$-diagrammable for some $k$.
\end{definition}

\begin{lemma}\label{lem:main}
Let $\sD_S$ be the set of all diagrams
on a $k$-diagrammable set $S$ with diagram metric $\rho$.
Then the
space $(\sD_S, W_\rho)$ has 
$k$-prisms. 
\end{lemma}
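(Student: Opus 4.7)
The plan is to attach a single, well-placed auxiliary point $p^* \in S^2$ to each diagram in $F$. Set $S_F := \bigcup_{D \in F} \supp(D)$ and $N := \max\set{\norm{\tx} \given \tx \in \tD,\, D \in F}$, both finite. Since the $k$-shell $\set{x \in S^2 \given \rho(x,\Delta_S) = k}$ is unbounded and $S_F$ is finite, the triangle inequality lets us pick $p^*$ on the $k$-shell with $\rho(p^*, s) \ge k + N$ for every $s \in S_F$. Define $T(D)$ to be the diagram obtained from $D$ by increasing the multiplicity at $p^*$ by one; in labeled form $\widetilde{T(D)} = \tD \cup \set{\tilde{p}}$ for a new label $\tilde{p}$ with $\norm{\tilde{p}} = k$. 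Condition~(1) of Definition~\ref{def:k-prisms} is then immediate, since $p^* \notin S_F$ forces $T(D) \notin F$ for every $D \in F$.

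To verify condition~(2), I would first obtain $W_\rho(T(D_X),T(D_Y)) \le W_\rho(D_X,D_Y)$ by augmenting an optimal matching of $\tD_X$ and $\tD_Y$ with the cost-zero pair $(\tilde{p}_X,\tilde{p}_Y)$, using that $\rho(\tilde{p}_X,\tilde{p}_Y) = \rho(p^*,p^*)=0$. For the reverse inequality, I would take an arbitrary matching $\tm'$ of $\widetilde{T(D_X)}$ and $\widetilde{T(D_Y)}$ and modify it---via a small case analysis on what $\tilde{p}_X$ and $\tilde{p}_Y$ are paired with, using the triangle inequalities $\rho(\tx,\ty) \le \rho(\tx,\tilde{p}_Y)+\rho(\tilde{p}_X,\ty)$ and $\norm{\tx} \le \rho(\tx,\tilde{p}_Y)+k$---into a matching that pairs $\tilde{p}_X$ with $\tilde{p}_Y$ without increasing cost, after which restriction yields a matching of $\tD_X,\tD_Y$ of equal cost. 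For condition~(3), the upper bound follows by extending an optimal matching of $\tD_X, \tD_Y$ with $\tilde{p} \in \widetilde{T(D_Y)}$ left unmatched, contributing $\norm{\tilde{p}} = k$. For the lower bound, given any matching $\tm'$ of $\tD_X$ and $\widetilde{T(D_Y)}$: if $\tilde{p}$ is unmatched, the restriction is a matching of $\tD_X,\tD_Y$ of cost at least $W_\rho(D_X,D_Y)$, giving total cost at least $k + W_\rho(D_X,D_Y)$; if $\tilde{p}$ is paired with some $\tx_0 \in \tD_X$, the far-placement inequality $\rho(\tilde{p},\tx_0) \ge k + N \ge k + \norm{\tx_0}$ shows that unpairing the two and leaving them both unmatched does not increase the cost, reducing to the previous case.

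The main obstacle is the lower bound in condition~(3): it is precisely here that $p^*$ must be chosen far from $S_F$, rather than merely on the $k$-shell. Without this control, pairing $\tilde{p}$ with a nearby point of $\tD_X$ could be cheaper than leaving it unmatched at cost $k$, and the equality $W_\rho(D_X,T(D_Y)) = k + W_\rho(D_X,D_Y)$ would fail. The unboundedness of the $k$-shell, guaranteed by $k$-diagrammability, is exactly the hypothesis that makes such a $p^*$ available; everything else is case analysis plus repeated use of the triangle inequality for $\rho$.
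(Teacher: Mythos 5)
Your proposal is correct and follows essentially the same route as the paper: both construct $T(D)=D+\indicator_{p}$ for a single point $p$ on the $k$-shell chosen far from the supports of all diagrams in the finite set, and verify the three prism conditions by extending or surgering optimal matchings. The only cosmetic differences are the quantitative threshold for ``far'' (you require $\rho(p,s)\ge k+\max\norm{\tx}$, the paper requires $\rho(p,x)>k+\max W_\rho(D,D')$ --- either suffices) and that you spell out the lower-bound case analysis somewhat more explicitly than the paper does.
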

\begin{proof}
 Consider a finite set of diagrams $\sF\subseteq \sD_S$.
Fix a non-diagonal point $p\in S^2$ that is not in any of the diagrams, 
\[p \in S^2 \setminus \left(\bigcup_{D\in \sF}\set{x\given D(x) \neq 0}\cup\Delta_S\right).\]
Since $S$ is $k$-diagrammable, we may assume $p$ to have been chosen such that 
$\rho(p,\Delta_S)=k$, and 

\begin{equation}\label{eq:p-choice} \min\set{\rho(p,x)\given \tx\in D, D\in\sF}>
%
\max_{D,D'\in\sF}\{k+W_\rho(D,D')\}.
\end{equation} 

Let $\indicator_p \colon S^2 \to \ZZ_{\geq 0}$ be the indicator function
\[
  \indicator_p(x) = 
\begin{cases}
1 & \text{if $x = p$,}\\
0 & \text{otherwise.}
\end{cases}\]
Let $T \colon \sF \to \sD_S$ be given by $D \mapsto D + \indicator_p$.
We show that $T$ satisfies the conditions of Definition~\ref{def:k-prisms}.  
It is clear that $\sF\cap T(\sF)=\emptyset$. 

Next, we show that $T$ is an isometry onto its image. Fix $D$ and $D'$ in $\sF$. 
Suppose $\tm$ is a partial matching for which $W_\rho(D,D')=W_{\tm,\rho}(D,D')$. The 
partial matching $\tm\cup \set{((p,1),(p,1))}$ between $T(D)$ and $T(D')$ clearly yields $W_{\rho}(T(D),T(D'))=W_\rho(D,D')$.
 
We claim that for every pair of diagrams 
 $D$, $D'$ in $\sF$, $W_\rho(D,T(D'))=k+W_\rho(D,D')$. 
 
 Take a partial matching $\tm$ such that $W_{\tm,\rho}(D,D')=W_\rho(D,D').$ 
Then $\tm$ defines a partial matching between $D$ and $T(D')$. 
Thus, \[W_\rho(D,T(D'))\le W_{\tm,\rho}(D,T(D'))=W_{\tm,\rho}(D,D')+\norm{\tilde{p}}=W_\rho(D,D')+k.\]
 
 If $\tm'$ is any partial matching between $D$ and $T(D')$ such that $(\tx,\tilde{p})\in\tm'$, then
\[W_{\tm',\rho}(D,T(D'))\ge \rho(\tx,\tilde{p})\ge W_{\rho}(D,D')+k,\] where the second inequality follows from~\eqref{eq:p-choice}. Thus, $W_\rho(D,T(D'))=W_\rho(D,D')+k$, as required.
\end{proof}

Finally, we consider persistence diagrams on the set $S=\mathbb{R}$. Let $\|\tx-\ty\|_{\infty}$ denote the sup metric on $\mathbb{R}^2$. We recall that for persistence diagrams $D$ and $D'$ and a real number $q>0$, we can calculate the \emph{Wasserstein $q$-metric} as 
\begin{multline*}
    W^q(D,D') = \\
    \inf_{\tm}\left\{\left(\sum_{(\tx,\ty)\in \tm}\norm{\tx-\ty}_{\infty}^q\right.\right.  \left.\left.\!\!\!+ \sum_{(x_1,x_2)\in D\setminus\pi_1(\tm)}\!\!\!\abs{x_1-x_2}^q + \!\!\!\sum_{(y_1,y_2)\in D'\setminus\pi_2(\tm)}\!\!\!\abs{y_1-y_2}^q\right)^{1/q}\right\}.
    \end{multline*}
Hence, we see by taking $\rho(x,y) = \|x-y\|_{\infty}^q$, we can realize $W^q$ as $(W_\rho)^{1/q}$. Notice this function is a metric on diagrams. Moreover, for any $k>0$ we see 
\[\rho((x,x+k^{1/q}), \Delta_{\mathbb{R}}) =\rho((x,x+k^{1/q}),(x,x)) = (k^{1/q})^q = k.\] The collection of these points $\{(x,x+k^{1/q})\}$ is unbounded. Hence, for each $q>0$ the collection of persistence diagrams with diagram metric $\rho$ as prescribed above is $k$-diagrammable for any $k>0$. Thus we obtain the following.

\begin{theorem} \label{thm:main} The space of persistence diagrams over $\mathbb{R}$ in the Wasserstein
  $q$-metric $(0<q<\infty)$ does not have property A. \qed 
\end{theorem}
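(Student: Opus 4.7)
The plan is to identify $W^q$ as a snowflake of a Wasserstein distance that is directly handled by Lemma~\ref{lem:main} and Theorem~\ref{thm:k-prisms_infinite_asdim}. Fix $q \in (0,\infty)$ and, following the paragraph immediately preceding the statement, set $\rho(x,y) = \|x-y\|_\infty^q$ on $\RR^2$, so that $W^q = (W_\rho)^{1/q}$ on $\sD_\RR$.

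Next, I would verify the $k$-diagrammability input required by Lemma~\ref{lem:main}. The computation $\rho((x, x + k^{1/q}), \Delta_\RR) = k$ shows that the $k$-shell around the diagonal contains the unbounded family $\{(x, x + k^{1/q})\}_{x \in \RR}$, so $\RR$ is $k$-diagrammable with respect to $\rho$ for every $k > 0$. Pick any integer $k \ge 1$. Lemma~\ref{lem:main} then provides $k$-prisms in $(\sD_\RR, W_\rho)$, and Theorem~\ref{thm:k-prisms_infinite_asdim} concludes that $(\sD_\RR, W_\rho)$ fails to have property~A.

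The remaining step is to transfer this failure from $W_\rho$ to $W^q = (W_\rho)^{1/q}$. My approach is to invoke the invariance of property~A under snowflake transformations of the distance: for any $s > 0$, the condition $d^s(x,y) \le r$ coincides with $d(x,y) \le r^{1/s}$, so a family $\{A_x\}$ with bound $B$ witnessing property~A for $(X,d)$ at scale $R^{1/s}$ also witnesses property~A for $(X, d^s)$ at scale $R$ with bound $B^s$. Taking $s = 1/q$ and $d = W_\rho$ and applying this in the contrapositive yields that $(\sD_\RR, W^q)$ fails property~A.

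The only step I expect to require any real care is this last snowflake-invariance observation, where one must keep track of how the parameters $R$, $\epsilon$, and $B$ reparameterize between the two metrics; in particular, note that property~A is being used as a condition on the coarse (rather than bi-Lipschitz) structure. Everything else is a direct invocation of the machinery already assembled earlier in the paper.
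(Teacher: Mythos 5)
Your proof is correct and takes essentially the same route as the paper: the paper's entire argument is the paragraph preceding the theorem statement (setting $\rho(x,y)=\|x-y\|_\infty^q$, verifying $k$-diagrammability via the unbounded $k$-shell around $\Delta_\RR$, and invoking Lemma~\ref{lem:main} together with Theorem~\ref{thm:k-prisms_infinite_asdim}). The only difference is that you make explicit the snowflake-invariance step transferring the failure of property~A from $W_\rho$ to $W^q=(W_\rho)^{1/q}$, which the paper leaves implicit; this is a worthwhile addition, since the $k$-prism structure itself is not literally preserved under $d\mapsto d^{1/q}$ and some reparameterization of $R$ and $B$ really is needed.
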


There is another common metric on the space of persistence diagrams called the bottleneck distance~\cite{chazal-book}. We remark that Theorem~\ref{thm:main} does not cover this case and so the following questions are quite natural~\cite{Bubenik-Wagner}.

\begin{question} Does the space of persistence diagrams over $\RR_{\ge 0}$ with the bottleneck distance have property A?
\end{question} 

Indeed, we are not even able to answer the simpler question (see~\cite{HR}).

\begin{question} Does the space of persistence diagrams over $\RR_{\ge 0}$ with the bottleneck distance have infinite asymptotic dimension?
\end{question}

Finally, because the space $C_k$ does embed uniformly in Hilbert space, the existence of $k$-prisms does not seem to prevent a uniform embedding in Hilbert space. Thus, the following question remains open. 
\begin{question}
Does the space of persistence diagrams (in a Wasserstein or Bottleneck metric) embed uniformly in Hilbert space?
\end{question}

\bibliographystyle{amsplain_initials_eprint_doi_url}


\providecommand{\bysame}{\leavevmode\hbox to3em{\hrulefill}\thinspace}
\providecommand{\MR}{\relax\ifhmode\unskip\space\fi MR }
\providecommand{\MRhref}[2]{%
  \href{http://www.ams.org/mathscinet-getitem?mr=#1}{#2}
}
\providecommand{\href}[2]{#2}

\end{document}